\title{Sharp uniform bound for the quaternionic Monge-Amp\`ere equation on hyperhermitian manifolds}
\author{Marcin Sroka}
\date{}
\newtheorem{theorem}{Theorem}[section]
\newtheorem{proposition}[theorem]{Proposition}
\newtheorem{lemma}[theorem]{Lemma}
\newtheorem{remark}[theorem]{Remark}
\newcommand{\hh}{\mathbb{H}}
\newcommand{\rr}{\mathbb{R}}
\newcommand{\ii}{\mathfrak{i}}
\newcommand{\jj}{\mathfrak{j}}
\newcommand{\kk}{\mathfrak{k}}
\newcommand{\pa}{\partial}
\newcommand{\bpar}{\overline{\partial}}
\newcommand{\jpar}{\partial_J}
\newcommand{\bjpar}{\overline{\partial_J}}
\newcommand{\ball}{B(z_0,2r)}
\begin{document}
\maketitle

\textbf{Abstract:} We provide the sharp $C^0$ estimate for the quaternionic Monge-Amp\`ere equation on any hyperhermitian manifold. This improves previously known results concerning this estimate in two directions. Namely, it turns out that the estimate depends only on $L^p$ norm of the right hand side for any $p>2$ (as suggested by the local case studied in \cite{Sr18}). Moreover, the estimate still holds true for any hyperhermitian initial metric - regardless of it being HKT as in the original conjecture of Alesker-Verbitsky \cite{AV10} - as speculated by the author in \cite{Sr21}. For completeness, we actually provide a sharp uniform estimate for many quaternionic PDEs, in particular those given by the operator dominating the quaternionic Monge-Amp\`ere operator, by applying the recent method of Guo and Phong \cite{GP22}.    

\textbf{Key words:} a priori estimates, Monge-Amp\`ere type equations, hypercomplex manifolds, hyperhermitian metrics 

\textbf{MSC2010:} 35R01, 53C55, 53C26

\section{Introduction}

In \cite{AV10} Alesker and Verbitsky posed an analogue of the famous Calabi conjecture, solved by Yau in \cite{Y78}, on certain hypercomplex manifolds. It asks whether, on hypercomplex manifold admitting HKT metric, every section of the canonical bundle, i.e. complex volume form, is associated to some HKT metric. If true it implies, in particular, that every representative of the first Bott-Chern class of the underlying complex manifold is Chern-Ricci curvature of an HKT metric and thus fits into the recently active area of studying Calabi-Yau type theorems for different classes of hermitian non-K\"ahler metrics. A much more elaborate discussion of this conjecture can be found in \cite{DS21}.

Above described problem reduces to solving certain second order fully nonlinear PDE, cf. (\ref{qma}) below, which Alesker studied earlier in local setting and called - the quaternionic Monge-Amp\`ere equation. This equation was also independently treated but again in local setting, i.e. the Dirichlet problem in a domain, by Harvey and Lawson in \cite{HL09}. 

In \cite{AV10} the conjecture was posed in a very conservative manner. Namely, under the assumptions that the hypercomplex manifold admits an HKT metric and the canonical bundle of the underlying complex manifold is holomorphically trivial. This corresponds to the setting in the original Calabi conjecture where the K\"ahler manifold satisfies $c_1=0$. If this conjecture is true the hypercomplex manifolds satisfying the mentioned assumptions would admit a very special HKT metric which can be thought of as hyperhermitian (torsion) analogue of Calabi-Yau metric (in place of HyperK\"ahler metrics which are very rigid and mysterious abjects). From the analytic point of view it is natural to expect that equation (\ref{qma}) is solvable more generally on every HKT manifold without any additional conditions. This corresponds to the full version of the original conjecture of Calabi - with the class of K\"ahler metrics exchanged for HKT ones, cf. \cite{AS17}. Even more generally, by taking into an account the result for the complex Monge-Amp\`ere equation on hermitian manifolds from \cite{TW10}, one can expect (\ref{qma}) to be solvable on any hyperhermitian or even almost hyperhermitian manifold - cf. the speculations in \cite{Sr21, DS21}. 

Currently, even the original conjecture from \cite{AV10} remains unsolved. The issue are a priori estimates for the equation (\ref{qma}). For the detailed discussion we refer to \cite{DS21}. We limit ourself to vaguely summarize the problems with obtaining the higher order estimates. These come from the fact that the hypercomplex structure is locally not flat i.e. not diffeomorphic to a domain in the flat quaternion space. This results in, when performing calculations for the application of the maximum principle, emergence of terms related to the curvature of the metric and the curvature of the hypercomplex structure. It turned out to be a hard task to control both simultaneously. However, when they do coincide, i.e. the initial metric is HyperK\"ahler, the higher order bounds were obtained in \cite{DS21}. This remains to be the most general sitting in which equation (\ref{qma}) was solved. 

Surprisingly, the $C^0$ estimate is known for the equation (\ref{qma}) on any HKT manifold. It was first shown by Alesker and Shelukhin \cite{AS17} who adopted the idea of Błocki \cite{B11}. Later, very lengthy proof from \cite{AS17} was much simplified in \cite{Sr19}. The method in the latter was strongly inspired by the sharpened version of the Moser iteration for nonlinear PDEs from \cite{TW10}. While the dependence of the estimate from \cite{AS17} on the right hand side is, stated there to be, on its $L^\infty$ norm, it seems to the author that once tracing the argument closely it may be reduced to $L^4$ norm. The dependence in \cite{Sr19} is on the $L^p$ norm where $p$ depends on the dimension of the manifold. One can wonder, what is the sharpest possible dependence of the $C^0$ estimate on the right hand side in (\ref{qma})? 

Let us consider, for a moment, the local setting. For the real Monge-Amp\`ere equation the uniform bound depends solely on $L^1$ norm of the right hand side, which is just Alexandrov maximum principle. For the complex Monge-Amp\`ere equation it is a deep result of Kołodziej, cf. \cite{K96}, that it depends on $L^p$ norm for any $p>1$ and one can not go to $p=1$. Later, Kołodziej proved, cf. \cite{K98}, that this local results has the global counterpart - the $C^0$ bound for the complex Monge-Amp\`ere equation on a compact K\"ahler manifold also depends only on $L^p$ norm of the right hand side for any $p>1$. Even more precisely the dependence in both situations in on some Orlicz norm of the right hand side. In \cite{Sr18} the author proved that for the Dirichlet problem for (\ref{qma}) in flat quaternionic space the uniform bound depends only on $L^p$ norm of the right hand side for any $p>2$ and that once can not go to $p=2$. Thus, this provides a sharp estimate in a local and flat setting. Taking that into consideration one can expect the uniform bound for the equation (\ref{qma}) to depend only on $L^p$ norm of the right hand side for $p>2$.

The main goal of the present paper is to derive such a sharp uniform estimates for the
quaternionic Monge-Amp\`ere and certain other quaternionic equations. For this, we
apply the method of Guo and Phong \cite{GP22} for fully non-linear equations satisfying a
specific structural condition on Hermitian manifolds. In our case, the role of this
structural condition is played by a new inequality between complex and quaternionic
Monge-Amp\`ere operators, which can be viewed as the quaternionic version of the classic
inequality of Cheng and Yau, cf. \cite{Be93}, between real and complex Monge-Amp\`ere operators. In this
context, we note that recognizing when a fully non-linear equation satisfies the
structural condition needed for the method of Guo and Phong is of considerable
interest, and that our inequality between mentioned operators
should be a useful addition to the conditions found by Harvey and Lawson \cite{HL22}. 

Let us also note that since it is still unknown whether equation (\ref{qma}) can be solved in smooth category, in general, obtaining the sharp uniform bound may be important for studying regularity of, possibly only weak, solutions to (\ref{qma}) (but our believe is that the conjecture from \cite{AV10} is true). The argument we present works not only for hypercomplex manifolds admitting HKT metrics but for any hyperhermitian metric. This provides partial confirmation, the $C^0$ bound, for the Conjecture 7.2.4. from \cite{Sr21}. We point out that regardless of sharpness of the estimate, no uniform bound was known for (\ref{qma}) on general hyperhermitian manifold before. It may be possible to adopt the reasoning in \cite{Sr19} from HKT to hyperhermitian setting. Definitely more torsion terms will appear in formula $(2.1)$ of \cite{Sr19} in that setting (as the form $\alpha$ will not be $\pa$ closed) but, as described above, surely this would not produce the sharp bound (if any).

Summarizing the last two paragraphs, the main theorem we prove is as follows.  

\begin{theorem} \label{mt}
Let $(M,I,J,K,g)$ be a closed, connected hyperhermitian manifold and let $\phi$ be a smooth solution of the quaternionic Monge-Amp\`ere equation 
\[\tag{1.1} \label{qma} \begin{cases} (\Omega + \pa \jpar \phi)^n = e^F \Omega^n \\ \Omega + \pa \jpar \phi \geq 0 \\ sup_M \phi = 0 \end{cases}. \] 
For any $p>2n$, where $n$ is the quaternionic dimension of $M$, there is a uniform bound
\[\label{qmab} \tag{1.2} - \inf_M \phi < C \] 
where the constant $C$ depends only on $p$, geometry of $(M,I,J,K,g)$ and $\parallel e^{2F} \parallel_{L^1(\log L)^p}$. Here \[\parallel e^{G} \parallel_{L^1(\log L)^p} = \int_M |G|^pe^{G} d vol_g\] 
for any smooth function $G$. 

In particular the bound (\ref{qmab}) depends only on $L^q$ norm of $e^F$ for any $q>2$.
\end{theorem}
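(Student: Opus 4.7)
The plan is to apply the method of Guo and Phong \cite{GP22} for fully non-linear equations on Hermitian manifolds, using as the required structural condition a new pointwise inequality between the complex and quaternionic Monge-Amp\`ere operators, which plays in the quaternionic setting the role that the classical Cheng--Yau inequality (cf.\ \cite{Be93}) plays in comparing the real and complex Monge-Amp\`ere operators. Concretely, one equips $M$ with the underlying Hermitian structure $(I,\omega)$, where $\omega(\cdot,\cdot):=g(I\cdot,\cdot)$, and uses the operator inequality to pass from equation (\ref{qma}) to a pointwise complex Monge-Amp\`ere inequality on $(M,I,\omega)$ to which the Guo--Phong machinery applies.

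The first and main step is to establish that, for every $\Omega$-qpsh $\phi$,
\[ (\omega + dd^c\phi)^{2n} \leq c_n\,(\Omega + \pa\jpar\phi)^n \wedge \overline{(\Omega + \pa\jpar\phi)^n} \]
as top-degree forms on $M$, with a constant $c_n$ depending only on $n$. This is a pointwise linear-algebraic statement whose starting point is the classical identity $\det_\cc(A_\cc)=\det_\hh(A)^2$ for a hyperhermitian $A\in\mhn$ and its canonical realification $A_\cc\in\mctwon$; what must be added, in analogy with the classical Cheng--Yau argument (Hadamard plus AM--GM), is a pointwise comparison between the complex and quaternionic Hessians of $\phi$ that accounts for the extra freedom in the complex Hessian. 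Applied to the solution of (\ref{qma}), and using the background identity $\Omega^n\wedge\overline{\Omega^n}=c'\omega^{2n}$, this yields
\[ (\omega+dd^c\phi)^{2n}\leq C\, e^{2F}\,\omega^{2n}\]
pointwise on $M$; positivity of the quaternionic Hessian also implies that $\phi$ is $\omega$-plurisubharmonic. The appearance of $e^{2F}$ rather than $e^F$ is the direct source of the exponent $2$ in the assumption $p>2n$ and in the $L^q$ corollary with $q>2$.

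With $\phi$ now an $\omega$-psh function whose complex Monge-Amp\`ere density is bounded by $C\,e^{2F}$, one applies the Guo--Phong scheme essentially verbatim on the Hermitian (non-K\"ahler) background $(M,I,\omega)$, with the Tosatti--Weinkove existence theorem \cite{TW10} furnishing the required auxiliary complex Monge-Amp\`ere solutions. For each $s>0$ one solves
\[ (\omega+dd^c\psi_s)^{2n}=A_s^{-1}\,\tau_s(-\phi)\, e^{2F}\,\omega^{2n},\qquad \sup_M\psi_s=0,\]
where $\tau_s$ is a smooth cutoff of the indicator of $\{\phi<-s\}$ and $A_s$ normalizes the right-hand side. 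The maximum principle applied to the Guo--Phong test function built from $\phi$ and a suitable fractional power of $-\psi_s$, combined with the upper bound on the complex Monge-Amp\`ere of $\phi$, yields a quantitative control of the sublevel sets $\{\phi<-s\}$ by $A_s$.

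A standard De Giorgi-type iteration on the distribution function of $-\phi$ then converts this comparison into the uniform bound. The hypothesis $\|e^{2F}\|_{L^1(\log L)^p}<\infty$ with $p>2n$ is precisely what controls $A_s$ at the iteration step via Young's inequality in the Orlicz duality between $L^1(\log L)^p$ and its exponential dual; the corollary for $L^q$ norms of $e^F$ with $q>2$ follows from elementary inequalities between Orlicz norms. The main obstacle is the first step: the quaternionic Cheng--Yau inequality appears to be new, and its proof requires careful bookkeeping of how the $(2,0)$-form $\pa\jpar\phi$ (with respect to $I$) relates to the $(1,1)$-form $dd^c\phi$ through the quaternionic structure, together with the verification that the constant $c_n$ is dimensional. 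Once this is in hand, the rest is a direct transcription of the Guo--Phong argument.
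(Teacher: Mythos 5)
Your overall plan (Guo--Phong plus a quaternionic Cheng--Yau type comparison) identifies the right ingredients, but the way you deploy the comparison breaks down at the very first step. You claim that positivity of $\Omega+\pa\jpar\phi$ implies that $\phi$ is $\omega$-plurisubharmonic and that $(\omega+dd^c\phi)^{2n}\leq c_n\,\Omega_\phi^n\wedge\overline{\Omega_\phi^n}$ holds pointwise for every $\Omega$-qpsh $\phi$. Both assertions are false. The form $\Omega+\pa\jpar\phi$ controls only the \emph{hyperhermitian part} of $\omega+\ii\pa\bpar\phi$, namely (up to a factor) $(\omega+\ii\pa\bpar\phi)-J(\omega+\ii\pa\bpar\phi)$, and positivity of this average does not force positivity of $\omega+\ii\pa\bpar\phi$ itself; the class of quaternionically admissible functions is strictly larger than the psh class. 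Moreover the determinant inequality genuinely needs nonnegativity of the complex Hessian: in standard coordinates on $\hh^2$ take the hermitian matrix $\mathrm{diag}(-10,11,-10,11)$ with $J$ pairing the coordinates in twos; its hyperhermitian part is $\tfrac12\,\mathrm{Id}>0$ with quaternionic determinant (squared) equal to $1$, while the complex determinant is $12100$, so the inequality you need fails. Even granting the inequality, the reduction would not close: an \emph{upper} bound $(\omega+dd^c\phi)^{2n}\leq Ce^{2F}\omega^{2n}$ on the Monge--Amp\`ere density of an $\omega$-psh function gives no $L^\infty$ control (consider $\log|s|_h$ for a holomorphic section $s$, whose Monge--Amp\`ere measure has zero density yet which is unbounded below); Kolodziej/Guo--Phong type bounds use that the function solves the equation, not merely an inequality on the density.

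The paper uses the same comparison inequality but applies it to a different function and at a different place in the argument. The auxiliary equation is the \emph{local} Dirichlet problem for the complex Monge--Amp\`ere equation in coordinate balls around the minimum point (solved via Caffarelli--Kohn--Nirenberg--Spruck, not Tosatti--Weinkove), with right-hand side $\tau_k(-u_s)A_{s,k}^{-1}e^{2nF}\omega_I^{2n}$. The comparison $(\pa\jpar\psi)^n\wedge\overline{(\pa\jpar\psi)^n}\geq c(n)(\ii\pa\bpar\psi)^{2n}$ is applied to the auxiliary solution $\psi_{s,k}$, which \emph{is} genuinely plurisubharmonic by construction, inside the maximum-principle estimate for the linearized quaternionic operator $L_f(\phi)$: combined with the arithmetic--geometric mean inequality and the structural condition $\prod_i\pa f/\pa\lambda_i\geq\gamma$, it bounds $L_f(\phi).\psi_{s,k}$ from below by the prescribed complex Monge--Amp\`ere density of $\psi_{s,k}$, yielding the key comparison $-u_s\leq\epsilon(-\psi_{s,k})^{2n/(2n+1)}$; the conclusion then follows from Lemma 2 of Guo--Phong. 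If you reorganize your argument so that the Cheng--Yau type inequality is applied to the auxiliary psh solutions rather than to $\phi$, the proof goes through.
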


The unified perspective outlined in \cite{HL09} suggests, at least in the PDE regime, to consider also more general equations of the eigenvalues of the quaternionic Hessian then just the quaternionic Monge-Amp\`ere one. Indeed, the Dirichlet problems for the equations factoring through eigenvalues of real, complex and quaternionic Hessians were solved in the viscosity sens in \cite{HL09}. The Dirichlet problem for such equations in domains of manifolds, supporting required geometric structures, was in turn  studied in \cite{HL11}.  One can view the real equations treated in \cite{L90, U02}, the complex ones treated in \cite{Sz18} and the quaternionic ones (\ref{qpde}) as the global, i.e. posed on closed manifold, versions of those. Equations (\ref{qpde}) were treated and solved under very restrictive assumptions in \cite{GZ22}. Provided the equation satisfies (\ref{3}) we much improve the uniform estimates proved there. Using a recent observation of \cite{HL22} we know this is the case in particular for any PDE defined by the operator dominating the quaternionic Monge-Amp\`ere equation. Theorem \ref{mt} becomes the special case of Proposition \ref{qpdebp}. We remark that the author is not aware of any geometric significance of those more general quaternionic Hessian equations.

Let us now describe the method of the proof of Proposition \ref{qpdebp} and its new element. We follow very closely the first step of the proof in \cite{GP22}, this time for the linearization of the quaternionic PDEs considered in (\ref{qpde}). During that step we add one more technique - the analogy of the idea of Cheng and Yau - the comparison between complex and quaternionic Monge-Amp\`ere operators. This seems to be unusual but allows us to use in this step, as an auxiliary equation, still \textit{the complex Monge-Amp\`ere equation} as in \cite{GP22} (even though we are dealing with PDEs not factoring through the eigenvalues of the complex Hessian). Because of that we are able to obtain the inequality required in Lemma 2 in \cite{GP22}. As noted there, this lemma does not depend in any way on the original equation and relies solely on nice properties - local exponential integrability - of plurisubharmonic functions appearing in the obtained inequality. 

We remark that the idea of using an auxiliary Monge-Amp\`ere equation has recently turned out to be extremely powerful for obtaining uniform estimates, and not only those, cf. \cite{B11,CC21,WWZ20,WWZ21,GPT21,GP22} to just name few recent and related to uniform estimates. An overwhelming and precise list of settings where it can be applied is presented in a recent survey \cite{GP23}. One can view our result as another example of such a setting, in which previously known methods either do not work or require considerably more effort and do not provide the sharp estimate.

In the next section we recall necessary background on hyperhermitian manifolds. In section 3 we discuss the inequality between determinants of a hermitian matrix and its hyperhermitian part as well as the resulting comparison between complex and quaternionic Monge-Amp\`ere operators on hypercomplex manifolds. In Section 4 we prove Proposition \ref{qpdebp} and consequently Theorem \ref{mt}.

\textbf{Acknowledgments:} This work originated during the author's visit to prof. Bo Berndtsson at Chalmers Technical University in Goteborg. The author is very grateful for the hospitality he was provided there, especially to prof. Bo Berndtsson for many illuminating discussions. The research was partially supported by the National Science Center of Poland grant no. 2019/35/N/ST1/01372.

\section{Preliminaries}

For a more elaborate discussion we refer to the Preliminary sections of \cite{Sr21, DS21}. We call $(M,I,J,K)$ a hypercomplex manifold provided the complex structures $I$, $J$ and $K$ satisfy
\[ \label{hypercomplex} \tag{2.1} IJK=-id_{TM}.\]
We remark that for us endomorphisms fields act from the right on $TM$ and from the left on differential forms. The latter action is given by
\[E \alpha = \alpha(\cdot E, ... , \cdot E)\]
for an endomorphisms field $E$ and differential form $\alpha$. A Riemannian metric $g$ on $(M,I,J,K)$ is called hyperhermitian if 
\[\label{hyperhermitian} \tag{2.2} g=Ig=Jg=Kg\]
and we call $(M,I,J,K,g)$ a hyperhermitian manifold. 

If not state otherwise once the hypercomplex structure is fixed we refer to the properties of complex differential forms, eg. being of type $(p,q)$, always with respect to $I$. Also the symbols $\pa$ and $\bpar$ are reserved for the Dolbeault operators of the induced complex manifold $(M,I)$. We define their twisted versions by 
\[ \begin{gathered} \jpar = J^{-1} \circ \bpar \circ J \\  \bjpar =  J^{-1} \circ \pa \circ J, \end{gathered} \]
those were introduced by Verbitsky, cf. \cite{AV10}. Notice that since $J$ is a real operator it is true that
\[\overline{\jpar \alpha} = \bjpar \overline{\alpha}\]
for any differential form $\alpha$. It is easy to prove that
\[\label{jaction} \tag{2.3} J \: : \: \Lambda^{p,q}M \longrightarrow \Lambda^{q,p}M.\]

As this is going to be heavily exploited by us, we discuss the infinitesimal model of the hyperhermitian manifold. Suppose we have a real vector space $V$ endowed with three almost complex endomorphisms  $I$, $J$, $K$ and an inner product $g$ satisfying (\ref{hypercomplex}) and (\ref{hyperhermitian}). We follow closely subsection 2.2 in \cite{DS21}, one can also find an even more elaborate discussion in \cite{H90}, Chapter 2. We can formally associate to $g$ a hyperhermitian sesquilinear form (in particular $\hh$ valued, where $\hh$ stands for the algebra of quaternions, so one has to be cautious about multi-linearity)
\[\tag{2.4} \label{decomp} H = g + \ii \omega_I + \jj \omega_J + \kk \omega_k.\]
Such $H$ is positive on non zero vector, $\hh$ linear on second entry and $\hh$ anti-linear in the first. It is an easy exercise to check that conversely every such $H$ is of the for (\ref{decomp}) for inner product $g$ satisfying (\ref{hyperhermitian}). 

We shuffle (\ref{decomp}) into
\[\label{shaff} \tag{2.5} H = (g + \ii \omega_I) + (\jj \omega_J + \kk \omega_k) := \bar{h} + \jj \Omega\]
where of course
\[\label{components} \tag{2.6} \begin{gathered} \bar{h} = g + \ii \omega_I, \\ \Omega = \omega_J - \ii \omega_K = \bar{h}(\cdot J,\cdot). \end{gathered}\]
We remark that this $\Omega$ is positive in a sens
\[\Omega(X,XJ) = \bar{h}(XJ,XJ) \geq 0\]
and we take that as a definition of being positive for a $J$-real $(2,0)$ form, see \cite{DS21} or \cite{Sr21} form more details. 

\begin{remark}
We use $\bar{h}$ in place of $h$ since the reader may be used to the convention that hermitian sesquilinear form is linear in first coordinate. One can view the form $\Omega$ - which we call the hyperhermitian form - as an analogue of the form $\omega$ in hermitian setting. As in the latter, $\Omega$ still determines a hyperhermitian metric completely. This is because from (\ref{components}) the form $\Omega$ controls $\bar{h}$ which in turn encodes $g$. We can repeat the above considerations without positivity assumption on $g$ which would just result in $\bar{h}$ or $\Omega$ not being positive definite.
\end{remark}

In particular, by choosing a basis for $V$ in which $I$, $J$, $K$ and $g$ are standard, we see that
\[\label{volpost} \tag{2.7} \Omega^n \wedge \overline{\Omega^n} = c(n) \omega_I^n = c(n) vol_g\]
for a dimensional constant $c(n)$.

\begin{remark} \label{coord}
Though we try to avoid local calculations for most of the time, it is always possible on hypercomplex manifold to choose special $I$ holomorphic coordinates in which, at the point, the hypercomplex structure and hyperhermitian metric are standard and moreover the first order derivatives of $J$ are zero, cf. \cite{DS21}. One can moreover assure that $\Omega+\pa \jpar \phi$ is diagonal, in the sens of (\ref{simul}), at the point in these coordinates. If not stated otherwise any local calculations below are always in that coordinates. 
\end{remark}

\section{Comparison between complex and quaternionic Hessians}

As announced in the introduction, we will prove in this section a version of an observation of Cheng and Yau on comparison, in their case, between real and complex Monge-Amp\`ere operators. We start with the comparison of the determinant of the complex Hessian and of, what we call, its hyperhermitian part for a plurisubharmonic function. 

\begin{lemma} 
Let $\alpha$ be a non-negative real $(1,1)$ form on hypercomplex manifold $(M,I,J,K)$. The following inequality holds
\[ \tag{3.1} \label{complexquaternioniccomparison} \left( \alpha - \alpha(\cdot J,\cdot J) \right)^{2n} \geq \alpha^{2n}. \]
\end{lemma}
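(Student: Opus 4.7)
The plan is to reduce (\ref{complexquaternioniccomparison}) to a pointwise linear-algebraic fact. Pick any $p \in M$ and choose coordinates at $p$ as in Remark \ref{coord}, so that $I$, $J$, $K$, $g$ are standard at $p$. Writing $\alpha = i H_{jk}\, dz^j \wedge d\bar z^k$ for a Hermitian $2n \times 2n$ matrix $H$, the assumption $\alpha \geq 0$ is equivalent to $H \geq 0$, and the assertion to prove becomes $\det(H - JH) \geq \det H$, where $JH$ denotes the Hermitian matrix representing the $(1,1)$-form $J\alpha = \alpha(\cdot J, \cdot J)$. It will be enough to establish the Loewner domination $H - JH \geq H$: combined with the positivity of $H$, this gives $H - JH \geq H \geq 0$, and the determinantal inequality then follows from monotonicity of $\det$ on the cone of positive-semidefinite Hermitian matrices.

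The heart of the argument is therefore to show $JH \leq 0$ whenever $H \geq 0$, which is the quaternionic analogue of the elementary fact underlying Cheng--Yau's real--complex comparison. The relevant structure is that, because $J$ anticommutes with $I$, it swaps $T^{1,0}_p M$ and $T^{0,1}_p M$; composing with complex conjugation produces an antilinear endomorphism $\tilde J : T^{1,0}_p M \to T^{1,0}_p M$, $\tilde J X := \overline{JX}$, with $\tilde J^2 = -\mathrm{id}$. This makes $T^{1,0}_pM$ into a quaternionic vector space (with $\ii$ acting as $I$ and $\jj$ as $\tilde J$), and the hyperhermitian $(1,1)$-forms -- those with $J\beta = -\beta$ -- are exactly those whose associated Hermitian form on $T^{1,0}_pM$ extends to an $\hh$-Hermitian one. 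Unwinding the definition $(J\alpha)(X, \bar Y) = \alpha(JX, J\bar Y)$, the antisymmetry of the 2-form $\alpha$, the reality of $J$ (so $J\bar Y = \overline{JY}$), and the identification $H_\alpha(X,Y) := -i\, \alpha(X, \bar Y)$ for $X, Y \in T^{1,0}_p M$, a short sign computation yields
\[
H_{J\alpha}(X, X) = - H_\alpha(\tilde J X, \tilde J X).
\]
Since $\tilde J X \in T^{1,0}_p M$ and $H_\alpha \geq 0$, the right-hand side is non-positive for every $X$, and so $JH \leq 0$. Equivalently, $H_{\alpha - J\alpha}(X, X) = H_\alpha(X, X) + H_\alpha(\tilde J X, \tilde J X) \geq H_\alpha(X, X) \geq 0$, which is the claimed Loewner domination.

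The only real technical obstacle is the sign bookkeeping in the identity displayed above: one has to be careful with the placement of complex conjugation, with the $\rr$-linearity of $J$ and its swap between $T^{1,0}$ and $T^{0,1}$, and with the antisymmetry of the 2-form. Once that identity is in place the argument collapses to a standard monotonicity statement for $\det$ on the positive-semidefinite cone, and (\ref{complexquaternioniccomparison}) follows. Conceptually, this parallels Cheng--Yau's observation that the complex Hessian, viewed as the $\cc$-linearly invariant part of the real Hessian, dominates a power of the real one; here the hyperhermitian part of $\alpha$, namely $\frac{1}{2}(\alpha - J\alpha)$, is the analogous $\hh$-invariant projection and it dominates $\alpha$ in the Loewner sense up to a factor of two, whence (\ref{complexquaternioniccomparison}) in fact holds with a significant slack over what is stated.
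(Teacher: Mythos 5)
Your proof is correct and follows essentially the same route as the paper: the key step in both is the observation that $-\alpha(\cdot J,\cdot J)$ is again a non-negative $(1,1)$-form, verified by the identical sign computation (your identity $H_{J\alpha}(X,X)=-H_{\alpha}(\tilde JX,\tilde JX)$ is exactly the paper's $\beta(Z,\overline{Z})=\alpha(W,\overline{W})$ with $W=\overline{ZJ}$). The only cosmetic difference is the concluding step, where you invoke monotonicity of $\det$ on the positive-semidefinite cone while the paper expands $(\alpha+\beta)^{2n}$ binomially and discards the non-negative mixed terms; these are equivalent.
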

\begin{proof}
Let us denote
\[\beta:= - \alpha(\cdot J,\cdot J) .\]
Since $J$ is a real operator, $\beta$ is a real form. Because of the properties of the action of $J$ on $(p,q)$ forms, cf. (\ref{jaction}), $\beta$ is also a $(1,1)$ form. We claim that this is also non-negative. That is because
\[ \beta(Z,\overline{Z})=-\alpha(ZJ,\overline{Z}J)=\alpha(\overline{Z}J,ZJ)=\alpha(W,\overline{W}) \geq 0\]
since $\alpha$ is non-negative, $J$ is a real operator and $W$ is a $(1,0)$ vector.

We obtain
\[ \left( \alpha - \alpha ( \cdot J, \cdot J) \right)^{2n} = (\alpha + \beta)^{2n}= \binom{2n}{k} \alpha^k \wedge \beta^{2n-k} \geq \alpha^{2n} \]
since both $\alpha$ and $\beta$ are non-negative which is the claimed inequality (\ref{complexquaternioniccomparison}).
\end{proof}

Now we prove the comparison between complex and quaternionic Monge-Amp\`ere operators. This will be crucial in the proof of the main theorem in the next section. We will, using (\ref{3}), estimate the determinant of the linearization from below by the quaternionic Monge-Ampere operator and then, using the proposition below, estimate further from below by the complex Monge-Amp\`ere operator. This will constitute the core of the proof.

\begin{proposition} \label{comperr} 
Let $\phi$ be a smooth function on a hypercomplex, not necessarily closed, manifold $(D,I,J,K)$ such that 
\[\ii \pa \bpar \phi \geq 0. \] 
The following comparison between $\phi$'s complex and quaternionic Monge-Amp\`ere operators holds
\[ \tag{3.2} \label{cmavsqma} (\pa \jpar \phi)^n \wedge \overline{(\pa \jpar \phi)^n} \geq c(n)(\ii \pa \bpar \phi)^{2n}, \]
for the dimensional constant $c(n)$.
\end{proposition}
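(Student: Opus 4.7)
The plan is to reduce (\ref{cmavsqma}) to a pointwise linear-algebraic statement at a single point and then invoke Lemma 3.1. Fix a point $p \in D$ and choose $I$-holomorphic coordinates around $p$ as in Remark \ref{coord}, so that at $p$ the hypercomplex structure and the standard hyperhermitian inner product are in canonical form. Since both sides of (\ref{cmavsqma}) are $(2n,2n)$-forms on a $4n$-dimensional manifold, verifying the inequality at the arbitrary point $p$ is enough.

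The first and main step is to establish the pointwise identity
\[(\pa \jpar \phi)^n \wedge \overline{(\pa \jpar \phi)^n} = c(n)\bigl(\ii \pa \bpar \phi - \ii \pa \bpar \phi(\cdot J,\cdot J)\bigr)^{2n}\]
at $p$, for the dimensional constant $c(n)$ from (\ref{volpost}). This is the analogue of (\ref{volpost}) applied to the pointwise hyperhermitian-type data manufactured from $\phi$. Set $\tilde\alpha := \ii \pa \bpar \phi - \ii \pa \bpar \phi(\cdot J,\cdot J)$; one checks that $\tilde\alpha$ is a real $(1,1)$ form satisfying $\tilde\alpha(\cdot J,\cdot J) = -\tilde\alpha$, which is the pointwise algebraic condition enjoyed by $\omega_I$ of a hyperhermitian metric. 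I would then argue that $\pa \jpar \phi$ is exactly the $J$-real $(2,0)$ form associated to $\tilde\alpha$ by the same prescription that pairs $\Omega$ with $\omega_I$ in (\ref{decomp})--(\ref{components}). This is verified by unpacking $\jpar = J^{-1}\bpar J$ acting on the scalar $\phi$ in the chosen flat coordinates adapted to $J$: the matrix of coefficients of $\pa \jpar \phi$ in a quaternionic $(1,0)$-basis equals the $J$-symmetrized part of the complex Hessian matrix of $\phi$, which is the very matrix representing $\tilde\alpha$ in the corresponding basis. Once this structural matching is set up, (\ref{volpost}) applies verbatim and produces the claimed identity of top forms.

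With this identity in hand, the proof concludes by applying Lemma 3.1 to the non-negative $(1,1)$ form $\alpha := \ii \pa \bpar \phi$. It gives
\[\bigl(\ii \pa \bpar \phi - \ii \pa \bpar \phi(\cdot J,\cdot J)\bigr)^{2n} \geq (\ii \pa \bpar \phi)^{2n},\]
and multiplying through by the positive constant $c(n)$ and combining with the identity from the previous paragraph yields (\ref{cmavsqma}).

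The main obstacle is the bookkeeping behind the identity of the second paragraph: one needs to verify that $\pa \jpar$ applied to a scalar really does produce, in the coordinates of Remark \ref{coord}, the $J$-real $(2,0)$ form associated to $\tilde\alpha$ by exactly the same recipe (\ref{components}) that turns $\omega_I$ into $\Omega$. This requires careful tracking of the $J$-action on complex $(1,0)$-vectors and their duals and of the conjugations involved in the definition of $\jpar$; once this algebraic identification is in place, the rest of the proof is essentially free, being a direct consequence of Lemma 3.1 and of the volume identity (\ref{volpost}) applied pointwise.
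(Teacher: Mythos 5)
Your proposal is correct and follows essentially the same route as the paper: the paper also reduces to the pointwise identity $(\pa \jpar \phi)^n \wedge \overline{(\pa \jpar \phi)^n} = c(n)\left(\ii\pa\bpar\phi - J(\ii\pa\bpar\phi)\right)^{2n}$ (note $J(\ii\pa\bpar\phi)=\ii\pa\bpar\phi(\cdot J,\cdot J)$, so this is exactly your $\tilde\alpha$), obtained by computing $(\pa\jpar\phi+\bpar\bjpar\phi)(\cdot I,\cdot J)$ in the coordinates of Remark \ref{coord} and invoking (\ref{volpost}), and then concludes with Lemma 3.1 applied to $\alpha=\ii\pa\bpar\phi$. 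The ``bookkeeping'' you flag as the main obstacle is precisely the short explicit computation (3.3)--(3.5) in the paper, which confirms your structural matching.
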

\begin{proof}
One can easily calculate in local coordinates described in the Preliminaries, cf. \cite{DS21} for the calculation,
\[\label{quathess} \tag{3.3} \pa \jpar \phi = \phi_{i\bar{j}} dz_i \wedge J^{-1}d\overline{z_j} = \phi_{i \bar{j}} (dz_i \otimes J^{-1} d \overline{z_j} - J^{-1}d\overline{z_j} \otimes dz_i).\]
Also from the Preliminaries, cf. (\ref{volpost}) for $\Omega:= \pa \jpar \phi$ which is easily checked to be positive under the plurisubharmonicity assumption on $\phi$, we have that
\[ \label{volumetransition} \tag{3.4} (\pa \jpar \phi)^n \wedge \overline{(\pa \jpar \phi)^n} = c(n) \left((\pa \jpar \phi + \bpar \bjpar \phi)(\cdot I, \cdot J)\right)^{2n}.\]
We thus compute 
\[ \pa \jpar \phi + \bpar \bjpar \phi = \phi_{i \bar{j}} (dz_i \otimes J^{-1} d \overline{z_j} - J^{-1}d\overline{z_j} \otimes dz_i + d\overline{z_j} \otimes J^{-1} d z_i - J^{-1}dz_i \otimes d\overline{z_j}) \]
and consequently
\[ \label{hyperhermpart} \tag{3.5} \begin{gathered} 
(\pa \jpar \phi + \bpar \bjpar \phi)(\cdot I, \cdot J) = \\
\phi_{i \bar{j}} (I dz_i \otimes J \circ J^{-1} d \overline{z_j} - I \circ J^{-1}d\overline{z_j} \otimes J dz_i + I d\overline{z_j} \otimes J \circ J^{-1} d z_i - I \circ J^{-1}dz_i \otimes J d\overline{z_j}) = \\
\ii \phi_{i\bar{j}} (dz_i \otimes d \overline{z_j} - d\overline{z_j} \otimes d z_i) - \ii \phi_{i\bar{j}}(J dz_i \otimes J d\overline{z_j}- Jd\overline{z_j} \otimes J dz_i) = \\ \ii \pa \bpar \phi - J(\ii \pa \bpar \phi). \end{gathered}\]

Now the claim follows from (\ref{volumetransition}) and (\ref{complexquaternioniccomparison}) in the previous lemma with $\alpha := \ii \pa \bpar \phi$.
\end{proof}

\section{Sharp $C^0$ estimate for certain quaternionic PDEs on hyperhermitian manifolds}

The PDEs, generalizing the quaternionic Monge-Amp\`ere equation, we are going to consider in this section are those which factor through the eigenvalues of the hyperhermitian form $\Omega+ \pa \jpar \phi$ for a smooth function $\phi$. We do not intend to elaborate on that too much but, $\pa \jpar \phi$ can be viewed, what is especially transparent in the case of the flat space - cf. \cite{Sr18, Sr21}, as the counterpart of the real Hessian $\nabla^2 \phi$ and the complex one $\ii \pa \bpar \phi$. Moreover, in the flat case it it truly given by a matrix of certain second order derivatives, cf. \cite{Sr21}, the so called Fueter operators. 

Precisely, having a hyperhermitian manifold $(M,I,J,K,g)$ we denote the perturbed hyperhermitian form by
\[\Omega_\phi := \Omega + \pa \jpar \phi\]
for a smooth function $\phi$. From Preliminaries we know $\Omega_\phi$ defines an associated symmetric, quaternion invariant form in $TM$ which we denote by $g_\phi$. 

\begin{remark}
Complex geometers should be warned that, by far, the associated, with respect to $I$, hermitian form of $g_\phi$ \textbf{is not} obtained by perturbing $\omega_I$ by a complex Hessian, i.e. it \textbf{is not} $\omega_I+ \ii \pa \bpar \phi$. In fact $\omega_{I,\phi}$ is obtained by perturbing $\omega_I$ by a hyperhermitian part of the complex Hessian $\ii \pa \bpar \phi$ as we have seen in (\ref{hyperhermpart}). 
\end{remark}

We define the $n$-tuple of the eigenvalues, 
\[ \lambda(\Omega_\phi)=\lambda(g_\phi)=\lambda(\phi),\] to be obtained by taking the eigenvalues of the hyperhermitian endomorphism
\[g^{-1} \circ g_\phi: (TM,I,J,K) \longrightarrow T^{*}M \longrightarrow (TM,I,J,K),\]
with their multiplicities divided by four. It is trivial to verify the eigenvalues of $g^{-1} \circ g_\phi$ have multiplicities divisible by four since this endomorphism is hermitian with respect to $I$, $J$ and $K$. We will often use an equivalent definition of $\lambda(\phi)$. Namely, $\lambda(\phi)$ is formed by the numbers $\lambda_i$ such that there is a frame $\{e_i\}_{i=0}^{2n-1}$ of $T^{1,0}M$ in which
\[ \label{simul} \tag{4.1} \begin{gathered} e_{2i+1} = e_{2i}J, \\ \Omega = e_{2i}^* \wedge e_{2i+1}^*, \\ \Omega_\phi = \lambda_i e_{2i}^* \wedge e_{2i+1}^*. \end{gathered}\] 

In Proposition \ref{qpdebp} we treat the equations of the form: 
\[ f \left(\lambda(\phi)\right) = e^F \]
for certain $f$'s which we describe now. We assume that:
\[ \label{1} \tag{4.2} 
\begin{gathered} f: \rr^n \supset \Gamma_f \longrightarrow (0,+\infty), \\ 
 f(1,...,1)=1, \\
 \{\lambda \in \rr^n \: | \: \lambda_i > 0\} \subset \Gamma \subset \{ \lambda \in \rr^n \: | \: \lambda_1 + ... + \lambda_n >0\}, \end{gathered} \]
where $f$ is symmetric in its arguments, homogeneous of degree one and $\Gamma$ is a symmetric cone,
\[\label{2} \tag{4.3} \frac{\pa f}{\pa \lambda_i}(\lambda) >0 \text{ for all } 1 \leq i \leq n \text{ and any } \lambda \in \Gamma,\]
\[\label{3} \tag{4.4} \prod_{i=1}^n\frac{\pa f}{\pa \lambda_i}(\lambda) \geq \gamma \text{ for some } \gamma>0 \text{ and any } \lambda \in \Gamma.\]
These are exactly the assumptions on $f$ from \cite{GP22}. Conditions (\ref{1}) and (\ref{3}) are important in the proof of Proposition \ref{qpdebp}. Especially the latter one which tells us that the determinant of the coefficients of the linearized operator is bounded from below. The first one gives also the normalization of the way in which PDE is written. Condition (\ref{2}) secures ellipticity. 

From \cite{HL22} we know that, in particular, any homogenized to degree one homogeneous symmetric polynomial which, after homogenization, dominates the $n$'th root of determinant satisfies (\ref{3}).

We are ready to state and proof the result from which Theorem \ref{mt} will follow.

\begin{proposition} \label{qpdebp}
Let $(M,I,J,K,g)$ be a closed, connected hyperhermitian manifold and let $\phi$ be a smooth solution of the following PDE 
\[\tag{4.5} \label{qpde} \begin{cases} f \left(\lambda(\phi) \right) = e^F \\ \lambda(\phi) \in \Gamma_f \\ sup_M \phi = 0 \end{cases} \] where $f$ and $\Gamma_f$ satisfy (\ref{1})-(\ref{3}). For any $p>2n$, where $n$ is the quaternionic dimension of $M$, there is a uniform bound
\[\label{qpdeb} \tag{4.6} - \inf_M \phi < C \] where the constant $C$ depends only on $p$, geometry of $(M,I,J,K,g)$ and $\parallel e^{2nF} \parallel_{L^1(\log L)^p}$.
\end{proposition}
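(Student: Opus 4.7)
The plan is to follow closely the first step of the argument in \cite{GP22}, applied to the linearisation of the equation (\ref{qpde}), with the crucial new ingredient being the determinant comparison of Section 3 --- Lemma 3.1 and Proposition \ref{comperr} --- which takes over the role played in the purely Hermitian setting of \cite{GP22} by the direct identification of the complex Hessian of $\phi$ with the Hessian used to define the original PDE.

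\emph{Auxiliary complex Monge--Amp\`ere equation.} For each parameter $s \geq 1$ I would introduce, on the underlying Hermitian manifold $(M,I,\omega_I)$, the equation
\[
(\omega_I + \ii \pa \bpar \psi_s)^{2n} = \tfrac{e^{-s\phi + 2nF}}{A_s}\, \omega_I^{2n}, \qquad \sup_M \psi_s = 0,
\]
where $2n$ is the complex dimension of $M$ and $A_s$ is the normalising integral. By Tosatti--Weinkove \cite{TW10} a smooth $\omega_I$-plurisubharmonic solution $\psi_s$ exists; using $\sup_M \phi = 0$ to absorb $e^{-s\phi}$ into a bounded multiplicative factor in the relevant Orlicz norm, the sharp Kolodziej-type bound from \cite{GPT21} (itself a byproduct of \cite{GP22}) yields $\|\psi_s\|_{L^\infty} \leq K$, with $K$ depending only on $p > 2n$, the geometry of $(M,I,\omega_I)$ and $\|e^{2nF}\|_{L^1(\log L)^p}$.

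\emph{Maximum principle.} Form the test function $\Phi := -\varepsilon(-\psi_s + \Lambda)^{n/(n+1)} - \phi$, with $\Lambda > K+1$ and $\varepsilon$ small to be chosen, and let $x_0$ be a point where $\Phi$ attains its minimum on $M$. In the frame (\ref{simul}) diagonalising $\Omega_\phi$ at $x_0$, the linearisation of $f\circ\lambda$ reads $L := \sum_i F^i (\pa \jpar \cdot)_{ii}$, with $F^i = (\partial f/\partial \lambda_i)(\lambda(\phi))$; by (\ref{2})--(\ref{3}) these are positive and satisfy $\prod_i F^i \geq \gamma$, while Euler's identity gives $\sum_i F^i \lambda_i = e^F$. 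Expanding $L\Phi(x_0) \geq 0$ and retaining the gradient term (whose sign is favourable since $t \mapsto t^{n/(n+1)}$ is concave) reduces to an inequality of the form
\[
\varepsilon \cdot \tfrac{n}{n+1}(-\psi_s(x_0)+\Lambda)^{-1/(n+1)} \sum_i F^i (\Omega + \pa\jpar\psi_s)_{ii}(x_0) \;\geq\; e^{F(x_0)} - C(\varepsilon,\Lambda,\Omega),
\]
where $\Omega + \pa\jpar\psi_s$ is non-negative because $\omega_I + \ii\pa\bpar\psi_s \geq 0$ forces the hyperhermitian part of a non-negative $(1,1)$-form to be non-negative, as seen in the proof of Lemma 3.1.

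\emph{Determinant comparison and conclusion.} At the minimum, AM-GM combined with (\ref{3}) gives the pointwise lower bound $\sum_i F^i (\Omega + \pa\jpar\psi_s)_{ii} \geq n\gamma^{1/n}\bigl(\det(\Omega + \pa\jpar\psi_s)\bigr)^{1/n}$, where the determinant is read off from (\ref{volpost}). Applying Lemma 3.1 to $\alpha = \omega_I + \ii\pa\bpar\psi_s$, together with (\ref{volpost}) and the auxiliary equation, then yields
\[
\det(\Omega + \pa\jpar\psi_s) \;\geq\; c(n)\,\bigl((\omega_I + \ii\pa\bpar\psi_s)^{2n}/\omega_I^{2n}\bigr)^{1/2} \;=\; c(n)\,\bigl(e^{-s\phi+2nF}/A_s\bigr)^{1/2}.
\]
This is the version of Proposition \ref{comperr} needed here: the complex Monge--Amp\`ere mass of the auxiliary $\psi_s$ is transferred into a lower bound for its quaternionic counterpart, so that the quaternionic linearisation $L$ is effectively driven by the complex Monge--Amp\`ere equation. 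Substituting back into the maximum principle inequality and using $\|\psi_s\|_\infty \leq K$ produces, at $x_0$, a pointwise inequality of the form $e^{-s\phi(x_0)/(2n)} \leq C$ depending only on the allowed data; since $\Phi(x_0) \leq \Phi(x)$ for all $x$, one concludes $-\phi(x) \leq 2n(\log C)/s + \varepsilon (K+\Lambda)^{n/(n+1)}$, from which (\ref{qpdeb}) follows after fixing $s,\varepsilon,\Lambda$. The main obstacle is exactly this determinant comparison step: in \cite{GP22} one uses directly the complex Monge--Amp\`ere mass of $\psi_s$ through the classical trace-determinant inequality, while here the linearisation is quaternionic and the two Hessians differ; Section 3 of the paper is precisely what closes the gap without losing the sharp exponent $p > 2n$.
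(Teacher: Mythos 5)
You have correctly isolated the new ingredient --- the comparison of Section 3 between the quaternionic linearisation and the complex Monge--Amp\`ere mass of the auxiliary function, entering through AM--GM plus the bound $\prod_i \partial f/\partial\lambda_i \geq \gamma$ --- and this matches the role Proposition \ref{comperr} plays in the paper's chain of inequalities (\ref{como}). However, the scaffolding around it has a genuine gap. Your auxiliary equation is global, with right-hand side $e^{-s\phi+2nF}/A_s$, and you claim that $\sup_M\phi=0$ lets you ``absorb $e^{-s\phi}$ into a bounded multiplicative factor''. The sign is backwards: $\phi\le 0$ gives $e^{-s\phi}\ge 1$, and this factor is unbounded precisely on the set where $\phi$ is very negative, i.e.\ precisely where you have no control. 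To bound $\|\psi_s\|_{L^\infty}$ in terms of the allowed data you would need a uniform a priori bound on the entropy of $e^{-s\phi+2nF}$, hence on integrals of $e^{-s\phi}$. Since $\lambda(\phi)\in\Gamma\subset\{\sum_i\lambda_i>0\}$ only yields quaternionic subharmonicity up to a constant (an $L^1$ bound via the Green function), and $\phi$ is not $\omega_I$-quasi-plurisubharmonic, no uniform exponential integrability is available --- and establishing one would be essentially circular with the estimate being proved. Relatedly, your concluding inequality $e^{-s\phi(x_0)/(2n)}\le C$ is only useful if you track how $C$, $\varepsilon$ and $\Lambda$ depend on $s$, which the sketch does not do; this exponential-auxiliary scheme does not obviously yield the sharp dependence on $\parallel e^{2nF}\parallel_{L^1(\log L)^p}$ for all $p>2n$.

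The paper avoids these issues by staying local: it solves the Dirichlet problem (\ref{cmadir}) by Caffarelli--Kohn--Nirenberg--Spruck in a coordinate ball around the minimum point of $\phi$, with right-hand side $\tau_k(-u_s)e^{2nF}/A_{s,k}$ in which no exponential of $\phi$ appears (the normalisations $A_{s,k}$ are then handled through the separately derived $L^1$ bound on $\phi$ and the entropy, via Lemma 4 of \cite{GP22}); it proves by the maximum principle the pointwise comparison $-u_s\le\epsilon(-\psi_{s,k})^{2n/(2n+1)}$ with $\epsilon^{2n+1}=C(n,\gamma)A_{s,k}$ --- this is where Proposition \ref{comperr} enters, exactly as in your determinant-comparison step --- and then concludes by Lemma 2 of \cite{GP22}, which requires only the local exponential integrability of normalised plurisubharmonic functions and is where the sharp exponent $p>2n$ actually comes from. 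To repair your argument along your own lines you would have to either replace the global auxiliary problem by this local one, or independently establish the missing uniform integrability of $e^{-s\phi}$, and in either case supply the iteration step that converts the pointwise comparison into the stated $L^\infty$ bound. (A minor point: the natural exponent in the test function is $2n/(2n+1)$, with $2n$ the complex dimension, rather than $n/(n+1)$.)
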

\begin{proof}[Proof of Proposition \ref{qpdebp}]

The following setting is similar to the one in the proof of Theorem 1 in \cite{GP22}. We recall it only for the readers convenience.  

One can cover the manifold under investigation with a collection of $I$ holomorphic coordinate balls $B(z,2r)$, still contained in twice as big coordinate balls, with radii $r\leq \frac 1 2$ and centers in all points $z$ of the manifold. Moreover, one can assume that in each of these balls the holomorphic coordinates are actually chosen so that the properties from Remark \ref{coord} are satisfies and additionally in any ball $B(z,2r)$
\[\label{coordbound} \tag{4.7} \frac 1 2 \pa \jpar |z|^2 \leq \Omega. \]
Equivalently, if the reader is insecure about this formalism, the hyperhermitian metric $g$ is bounded up to the constants from below by the flat metric in the chosen coordinates. Let $z_0$ be the point where $\phi$ is achieving its absolute minimum. We can assume 
\[ -\phi(z_0) \geq 2\] 
since otherwise we do have an uniform bound. 

Let us consider, after Guo and Phong - cf. (2.5) in \cite{GP22}, the functions
\[\label{usdef} \tag{4.8} u_s(z) = \phi(z) - \phi(z_0) + \frac 1 2 |z|^2 - s\]
for any $s \in (0,2r^2)$ in the ball $B(z_0,2r)$. 
It is easy to see that $u_s>0$ on $\partial \ball$. We denote the associated sub-level sets of $u_s$ by
\[B_s = \{z \in \ball \: | \: u_s(z) <0 \},\]
their closures are clearly compact in $\ball$.

Now we are about to set the auxiliary \textit{complex Monge-Amp\`ere equation} with whose solutions we are going to compare $u_s$. Namely, set the Dirichlet problem, motivated by $(2.9)$ in \cite{GP22}, 
\[ \tag{4.9} \label{cmadir} \begin{cases}
(\ii \pa \bpar \psi_{s,k} )^{2n} = \frac{\tau_k(-u_s)}{A_{s,k}}e^{2nF}\omega^{2n}_I \text{ in } \ball \\
\ii \pa \bpar \psi_{s,k} \geq 0 \\
\psi_{s,k} = 0, \text{ on } \partial \ball
\end{cases} \] 
where $\tau_k$ are smooth strictly positive functions decreasing to $x \cdot \chi_{\mathbb{R}_+}(x)$ on $\mathbb{R}$ and
\[A_{s,k} = \int_{\ball} \tau_k(-u_s) e^{2nF}\omega^{2n}_I.\] 
The smooth solutions to (\ref{cmadir}) exist by the classical result of Caffarelli et al. \cite{CKNS85}. 

Note that due to the assumptions on $\tau_k$
\[ \tag{4.10} \label{Aconverg} A_{s,k} \longrightarrow \int_{B_s} (-u_s)e^{2nF}\omega^{2n}_I:=A_s\]
and for later reference we remark that 
\[\tag{4.11} \label{psinormalization} \int_{\ball} (\ii \pa \bpar \psi_{s,k})^{2n} = 1. \]

\textbf{We claim that:}  
\[\tag{4.12} \label{usbound} -u_s \leq \epsilon (-\psi_{s,k})^{\frac{2n}{2n+1}}\]
in $\ball$ where 
\[\tag{4.13} \label{epsilonform} \epsilon^{2n+1} = C(n,\gamma) A_{s,k}.\]

\begin{proof}[Proof of the \textbf{Claim}]

To prove this claim let us consider the function
\[\Psi = -u_s - \epsilon (-\psi_{s,k})^{\frac{2n}{2n+1}}\]
which after rephrasing (\ref{usbound}) is needed to be non positive in $\ball$. 

From the definition of $u_s$, cf. (\ref{usdef}), and the fact that $\psi_{s,k}$ is non-positive - $\Psi$'s positive maximum can occur only inside $B_s$, at some $z_{max}$. By the maximum principle the linearization $L_f(\phi)$, at $\phi$ of the operator $\log f$ for $f$ from (\ref{qpde}), of $\Psi$ at $z_{max}$ is non positive.

Thus we write
\[\label{org} \tag{4.14} 0 \geq L_f(\phi ).\Phi = L_f(\phi ).(-u_s) -\epsilon L_f(\phi).(-\psi_{s,k})^{\frac{2n}{2n+1}}.\]
Observe that
\[ \label{precos} \tag{4.15} \pa \jpar (-u_s) = - \pa \jpar \phi - \frac{1 }{2} \pa \jpar |z|^2 \geq - \pa \jpar \phi - \Omega , \]
where in the last inequality we used the assumption (\ref{coordbound}). Due to ellipticity this results in
\[ \label{cos} \tag{4.16} L_f(\phi).(-u_s) \geq - L_f(\phi).\Omega_\phi. \]
Note that since the operator is linear the last expression makes sense so we use this abused notation through the whole proof. 

The last inequality actually means that
\[ \label{hmm} \tag{4.17} L_f(\phi).(-u_s) \geq - C, \] 
due to homogeneity assumption on the operator $f$ (Euler formula gives that the last term in (\ref{cos}) is a constant).

Let us now turn our attention to the remaining term in (\ref{org}). For that goal we compute
\[ \begin{gathered} 
\pa \jpar \left( (-\psi_{s,k})^{\frac{2n}{2n+1}} \right) = \\
\frac{2n}{2n+1} \pa \left( (-\psi_{s,k})^{\frac{-1}{2n+1}} \jpar (-\psi_{s,k}) \right) = \\
-\frac{2n}{(2n+1)^2}(-\psi_{s,k})^{\frac{-2n-2}{2n+1}} \pa (-\psi_{s,k})\wedge \jpar (-\psi_{s,k}) + \frac{2n}{2n+1} (-\psi_{s,k})^{\frac{-1}{2n+1}} \pa \jpar (-\psi_{s,k}) \leq \\
\frac{2n}{2n+1} (-\psi_{s,k})^{\frac{-1}{2n+1}} \pa \jpar (-\psi_{s,k}), \end{gathered}\]
where we have used the fact that the form 
\[ \alpha \wedge J^{-1} \overline{\alpha}\] 
is positive for any $(1,0)$ form $\alpha$. 

Arguing as above, in (\ref{precos}) and (\ref{cos}), we obtain 
\[\label{another} \tag{4.18} \begin{gathered} 
-\epsilon L_f(\phi).(-\psi_{s,k})^{\frac{2n}{2n+1}} = \\
-\epsilon L_f(\phi).\left( \pa \jpar (-\psi_{s,k})^{\frac{2n}{2n+1}} \right) \geq \\
\epsilon \frac{2n}{2n+1} (-\psi_{s,k})^{\frac{-1}{2n+1}} L_f(\phi).\pa \jpar (\psi_{s,k})= \\
\epsilon \frac{2n}{2n+1} (-\psi_{s,k})^{\frac{-1}{2n+1}} L_f(\phi).\psi_{s,k}. \end{gathered}\]

Putting (\ref{hmm}) and (\ref{another}) in (\ref{org}) results in
\[ \label{como} \tag{4.19} \begin{gathered} 
0 \geq -C + \epsilon \frac{2n}{2n+1} (-\psi_{s,k})^{\frac{-1}{2n+1}} L_f(\phi).\psi_{s,k} = \\
-C + \epsilon \frac{2n}{2n+1} (-\psi_{s,k})^{\frac{-1}{2n+1}} \left( \sum_{i=1}^{n}\frac{\frac{\partial f}{\partial \lambda_i}\left(\lambda(\phi)\right)}{f\left(\lambda(\phi)\right)} (\psi_{s,k;2i\overline{2i}} + \psi_{s,k;2i+1 \overline{2i+1}}) \right) \geq \\
-C + \epsilon \frac{2n^2}{2n+1} (-\psi_{s,k})^{\frac{-1}{2n+1}} \left( \prod_{i=1}^{n}\frac{\frac{\partial f}{\partial \lambda_i}\left(\lambda(\phi)\right)}{f\left(\lambda(\phi)\right)}\right)^{\frac{1}{n}} \left( \frac{(\pa \jpar \psi_{s,k})^n}{\Omega^n} \right)^{\frac 1 n} \geq \\
-C + \epsilon C' \gamma \frac{2n^2}{2n+1} (-\psi_{s,k})^{\frac{-1}{2n+1}} \frac{1}{f\left(\lambda(\phi)\right)} \left( \frac{(\ii \pa \bpar \psi_{s,k})^{2n}}{\omega^{2n}_I} \right)^{\frac{1}{2n}} =\\
-C + \epsilon C' \gamma \frac{2n^2}{2n+1} (-\psi_{s,k})^{\frac{-1}{2n+1}} \frac{1}{e^F} \frac{(\tau_k(-u_s))^{\frac{1}{2n}}}{A_{s,k}^{\frac{1}{2n}}}e^{F} \geq \\
-C + \epsilon C' \gamma \frac{2n^2}{2n+1} (-\psi_{s,k})^{\frac{-1}{2n+1}} \frac{(-u_s)^{\frac{1}{2n}}}{A_{s,k}^{\frac{1}{2n}}}.
 \end{gathered}\]
In the above: by no harm we assumed that $\Omega_\phi$ is diagonal in the first equality, take into an account the formula (\ref{quathess}) for the quaternionic Hessian $\pa \jpar \psi_{s,k}$ in coordinates from Remark \ref{coord}; second inequality is just the arithmetic geometric mean inequality; third inequality follows from Proposition \ref{comperr}; the last inequality follows from properties of $\tau_k$. 
 
After rewriting we conclude from (\ref{como}) that
\[\frac{C(n,\gamma)}{\epsilon^{2n}} (-\psi_{s,k})^{\frac{2n}{2n+1}} A_{s,k}  \geq -u_s.\] 
In order to obtain (\ref{qpdeb}) we can just set $\epsilon$ to satisfy
\[\frac{C(n,\gamma)}{\epsilon^{2n}} A_{s,k} = \epsilon. \]
\end{proof}

Coming back to the proof of Proposition \ref{qpdebp}, we note that due to (\ref{usbound}), the form of $\epsilon$ in (\ref{epsilonform}), the properties (\ref{psinormalization}) and (\ref{cmadir}) the functions $u_s$ and $\psi_{s,k}$ satisfy all the assumptions of Lemma $2$ in \cite{GP22}. Consequently, we obtain from that lemma the requested bound (\ref{qpdeb}). Let us just note that, though it is omitted in $(2.15)$ of \cite{GP22}, the constant there of course depends on the family $\{A_s\}_{s \in (0,2r^2)}$ or equivalently due to their Lemma $4$ on the entropy. 

Let us also note that in our setting we do have a bound on the $L^1$ norm of $\phi$, as originally derived for the quaternionic Monge-Amp\`ere equation by Alesker and Shelukhin in \cite{AS13}. 

This is because from the assumption (\ref{1})
\[\frac{(\Omega + \pa \jpar \phi)\wedge \Omega^{n-1}}{\Omega^n} \geq 0\]
this results in
\[\frac{\pa \jpar \phi \wedge \Omega^{n-1}}{\Omega^n} \geq -C\]
and now the classical argument for the existence of a bounded Green's function, like in \cite{AS13}, for this linear operator produces the desired $L^1$ bound. 
\end{proof}

Using the above proposition it is trivial to justify Theorem \ref{mt} what we now do.

\begin{proof}[Proof of Theorem \ref{mt}]

Define the operator 
\[ f(\phi) = \left(\frac{(\Omega+ \pa \jpar \phi )^n}{\Omega^n}\right)^{\frac{1}{n}}.\]
Clearly we are in a position to apply Proposition \ref{qpdebp}. It will provide us with the bound
\[- \inf_M \phi < C \]
where the dependence on the right hand side is controlled by 
\[ \parallel \left( e^{\frac F n} \right)^{2n}\parallel_{L^1(\log L)^p} = \parallel e^{2F} \parallel_{L^1(\log L)^p}, \]
as desired.

This quantity, in particular, controls $L^q$ norm of $e^F$ for any $q>2$. 
\end{proof}

\begin{remark}
One could wonder whether instead of enriching the described method by Cheng-Yau type trick one could just repeat the proof from \cite{GP22} with the complex Monge-Amp\`ere equation exchanged for the quaternionic one. There are at least two problems with this approach. First of all, the class of admissible functions for the quaternionic Monge-Amp\`ere equation is much wilder then the class of plurisubharmonic functions. Concretely, the analogue of $(2.16)$ in \cite{GP22} fails for those. It was shown in \cite{Sr18} that those function are locally integrable with any exponent $p<2$. Potentially this cold be enough but still we do not know if this bound is uniform in terms of the quaternionic Monge-Amp\`ere mass and, as mentioned in the Introduction, this flat picture does not represent the local situation on generic hypercomplex manifold. More problematic is the analogue of \cite{CKNS85}, i.e. the solvability of the smooth Dirichlet problem locally. Again, contrary to the complex setting, where the equation always looks the same locally, there is no one local model of the quaternionic Monge-Amp\`ere equation we are forced to solve. Even if we would be able to do that, this would required a substantial and elaborate additional work in comparison to the method we present.

Let us also note that the applicability of the reasoning from \cite{GPT21} is even more questionable in the proof of Theorem \ref{mt}. This is partially from the reason already noted in \cite{GP22}. For equation (\ref{qma}) there is no a priori normalization on the right hand side guaranteeing the solvability, i.e. every instantiation of quaternionic Monge-Amp\`ere equation is already a torsion one. More importantly, the issue is that in our case the mentioned conjecture from \cite{AV10} is still unsolved. As a result - if we choose as an auxiliary equation the global quaternionic Monge-Amp\`ere equation, we do not know whether it is actually solvable.  
\end{remark}

\noindent {CIRGET LABORATORY \\ 
CENTRE DE RECHERCHES MATH\'EMATIQUES (CRM) \\
PRESIDENT KENNEDY BUILDING, 201 AVENUE DU PR\'ESIDENT KENNEDY \\
MONTR\'EAL, QU\'EBEC, H2X 3Y7 \\
CANADA \\
\textit{E-mail address:} sroka.marcin@uqam.ca}

\bigskip

\noindent {FACULTY OF MATHEMATICS AND COMPUTER SCIENCE\\
OF JAGIELLONIAN UNIVERSITY \\
\L OJASIEWICZA  6 \\
30-348, KRAK\'OW \\
POLAND \\
\textit{E-mail address:} marcin.sroka@im.uj.edu.pl}

\end{document}